\renewcommand\eqref[1]{(\ref{#1})} 
\def\A{{\mathcal A}}
\def\o{\omega}
\def\th{\theta}
\def\1{{\boldsymbol 1}}
\def\H{\mathcal H}
\def\K{\mathcal K}
\def\R{\mathbb{R}}
\def\e{{\sf e}}
\def\g{{\mathfrak g}}
\def\im{\mathop{\mathsf{Im}}\nolimits}
\def\({\left(}
\def\[{\left[}
\def\){\right)}
\def\]{\right]}
\def\si{\sigma}
\def\Aut{\mathfrak{Aut}}
\def\G{{\sf G}}
\def\p{\parallel}
\def\<{\langle}
\def\>{\rangle}
\newtheorem{Theorem}{Theorem}[section]
\newtheorem{Remark}[Theorem]{Remark}
\newtheorem{Definition}[Theorem]{Definition}
\newtheorem{Definition-Proposition}[Theorem]{Definition-Proposition}
\numberwithin{equation}{section}
\begin{document}


\title{Spectral Analysis for Perturbed Operators on Carnot Groups}

\date{\today}

\author{Marius M\u antoiu \footnote{
\textbf{2010 Mathematics Subject Classification: Primary 22E30, Secondary 22E25, 35P05.}
\newline
\textbf{Key Words:}  stratified Lie group, invariant differential operator, commutator, singular spectrum, sublaplacian.}
}
\date{\small}
\maketitle \vspace{-1cm}


\begin{abstract}
Let $\G$ be a Carnot group of homogeneous dimension $M$ and $\Delta$ its horizontal sublaplacian. For $\alpha\in(0,M)$ we show that operators of the form $H_\alpha:=(-\Delta)^\alpha+V$ have no singular spectrum, under generous assumptions on the multiplication operator $V$. The proof is based on commutator methods and Hardy inequalities.
\end{abstract}

\section{Introduction}\label{duci}

The purpose of this article is to show that certain operators on Carnot groups have purely absolutely continuous spectrum. These operators are essentially perturbations of sublaplacians by multiplication operators. We stress that the functions defining these multiplication operators are not supposed to decay at infinity. 

\smallskip
Let $H$ be a self-adjoint densely defined operator in a (complex separable) Hilbert space $\H$ with spectral measure $E_H$\,. If $u\in\H$\,, then 
\begin{equation*}\label{spmeas}
\mu_H^u(\cdot):=\<E_H(\cdot)u,u\>=\,\p\! E_H(\cdot)u\!\p^2
\end{equation*} 
defines a positive Borel measure on $\R$ with total mass $\p\!u\!\p^2$\,. We say that
\begin{itemize}
\item
the vector $u$ belongs to {\it the pure point space of} $H$ (and write $u\in\H_H^{\rm p}$) if $\mu^u_H$ is atomic,
\item
the vector $u$ belongs to {\it the absolutely continuous space of} $H$ (and write $u\in\H_H^{\rm ac}$) if $\mu^u_H$ is absolutely continuous with respect to the Lebesgue measure,
\item
the vector $u$ belongs to {\it the singularly continuous space of} $H$ (and write $u\in\H_H^{\rm sc}$) if $\mu^u_H$ is singular with respect to the Lebesgue measure and atomless.
\end{itemize}
One has direct sum decompositions
\begin{equation*}\label{decomposition}
\H=\H_H^{\rm p}\oplus\H_H^{\rm ac}\oplus\H_H^{\rm sc}=\H_H^{\rm p}\oplus\H_H^{\rm c}=\H_H^{\rm ac}\oplus\H_H^{\rm s}\,,
\end{equation*}
where $\H_H^{\rm c}:=\H_H^{\rm ac}\oplus\H_H^{\rm sc}$ is {\it the continuous space} and $\H_H^{\rm s}:=\H_H^{\rm p}\oplus\H_H^{\rm sc}$ is {\it the singular space} (with respect to $H$). We note that $\H_H^{\rm p}$ is the closed subspace generated by the eigenvectors of $H$\,. For our operators we are going to prove that $\H_H^{\rm s}=\{0\}$\,; this means that $H$ has no eigenvalues and no singularly continuous spectrum.
For the importance of determining the spectral types of a self-adjoint operator, for examples and applications (to scattering theory for example) we refer to \cite{ReSi}; actually the literature before or after \cite{ReSi} is huge.

\smallskip
The $m$-dimensional Carnot group $\G$ is stratified, so it possesses interesting {\it horizontal sublaplacians} $\Delta:=\sum_{j=1}^{m_1}X_j^2$, constructed with the part of a basis $\{X_1,\dots,X_{m_1},\dots,X_m\}$ of its Lie algebra $\g$ corresponding to the first vector subspace of its grading. The study of such operators is an important topic in Functional and Harmonic Analysis \cite{FR,FS,Ri}. For $\alpha>0$ wet us set $L^\alpha:=(-\Delta)^{\alpha/2}$\,; it is a self-adjoint unbounded positive operator in $\H:=L^2(\G)$\,. We consider perturbations $H_\alpha:=L^\alpha+V$ by multiplication operators (also called potentials). If $\G=\R^n$ is the simple Abelian case and $\alpha=2$\,, these are usually called {\it Schr\"odinger operators} and play a central role in Quantum Mechanics.

\smallskip
For such operators we show that $\H_H^{\rm s}=\{0\}$\,, under some assumptions on the potential $V$\,. A main result is described in section \ref{traci} and some other versions are exposed in section \ref{umilire}.
Our proof relies on a method in spectral theory involving positivity of the commutator of the studied operator with an auxiliary operator. It comes from \cite{BKM} (see also \cite{BM,CHM}), it is an extension to unbounded operators of a classical result of Kato and Putnam \cite{Ka,Pu,ReSi} and it is different from the well-known Mourre theory \cite{Mou}. It has been applied for various problems in spectral analysis in \cite{IM,MR,MRT,MT}. Schr\"odinger operators (Abelian $\G$) have been treated in \cite{BKM}. This method is reviewed in subsection \ref{vitalis} and applied to our case in subsection \ref{intrebare}. To apply the method, we rely heavily on dilations and on Hardy inequalities.


\section{The main result}\label{traci}

Let $\G$ be a Carnot group of step $r$\,, with unit $\e$ and Haar measure $dx$\,. Its Lie algebra can be written as a direct sum of vector subspaces
\begin{equation}\label{lie}
\g=\mathfrak v_1\oplus\dots\oplus \mathfrak v_r\,,
\end{equation}
where $\,[\mathfrak v_1,\mathfrak v_k]=\mathfrak v_{k+1}$ for every $k=1,\dots,r-1$ and $\,[\mathfrak v_1,\mathfrak v_r]=\{0\}$\,. Obviously $\G$ is a stratified group \cite{FR,FS}, hence (connected and simply connected) nilpotent and thus the exponential map ${\sf exp}:\mathfrak g\to\G$ is a diffeomorphism with inverse denoted by ${\sf log}$\,. We set notations for the dimensions
\begin{equation*}\label{dim}
m_k:=\dim\mathfrak v_k\,,\quad m:=\dim\g=m_1+m_2+\dots+m_r\,.
\end{equation*}
and for {\it the homogeneous dimension} 
\begin{equation*}\label{homdim}
M:=m_1+2m_2\dots+rm_r\,.
\end{equation*}

We fix a basis $\{X_1,\dots,X_m\}$ of $\g$ such that the first $m_1$ vectors $\{X_1,\dots,X_{m_1}\}$ form a basis of $\mathfrak v_1$\,. It can be used to define coordinates on the group $\G\ni x\to({\sf x}_1,\dots,{\sf x}_m)\in\R^m$ by the formula $\,x={\sf exp}\big[\sum_{j=1}^m{\sf x}_jX_j\big]$ (i.e. $({\sf x}_1,\dots,{\sf x}_m)$ are the coordinates of ${\sf log}\,x\in\g$ with respect to this basis). For a function $V:\G\to\R$ which is smooth in $\G\setminus\{\e\}$ we set
\begin{equation*}\label{prut}
(\mathbf EV)(x):=\sum_{j=1}^m\nu_j{\sf x}_j(X_j V)(x)\,,
\end{equation*}
where $\,\nu_j:=k$ if $\,m_{k-1}< j\le m_k$ (we set $n_0=0$ for convenience). One could say that $\mathbf E V$ is {\it the radial derivative of the function $V$ with respect to the natural dilations} (cf. subsect. \ref{vitanidis}).

\smallskip
Looking at the elements of the Lie algebra as derivations in spaces of functions on $\G$\,, one has {\it the horizontal gradient} $\,\nabla:=(X_1,\dots,X_{m_1})$ and {\it the horizontal sublaplacian}
\begin{equation*}\label{subL}
\Delta:=-\sum_{j=1}^{m_1}X_j^* X_j=\sum_{j=1}^{m_1}X_j^2.
\end{equation*} 
It is known that $-\Delta$ is a self-adjoint positive operator in $L^2(\G)$\,; its domain is the Sobolev space $\H^2(\G)$\,. Then $\,L^\alpha:=(-\Delta)^{\alpha/2}$ is also a self-adjoint positive operator; its domain is the Sobolev space $\H^\alpha(\G)$\,, defined as in \cite{FR,FS,Ri}. Note that, with our convention, one has $-\Delta=L^2$\,.

\smallskip
Let us choose a homogeneous quasi-norm $\lfloor\cdot\rfloor:\G\to[0,\infty)$ on $\G$\,. It is known \cite{CCR} that, if $\beta\in(0,M/2)$\,, the operator $\lfloor x\rfloor^{-\beta}(-\Delta)^{-\beta/2}$, first defined on $C_0^\infty(\G)$\,, extends to a bounded operator $T$ in $L^2(\G)$\,. Let us denote by $\kappa_{2\beta}$ its operator norm (it is not known in general).

\begin{Theorem}\label{maine}
Let $\alpha\in(0,M)$ and $\,V:\G\to\mathbb R$ a bounded function of class $C^2(\G\setminus\{\e\})$\,,  with $\mathbf EV$ bounded, such that
\begin{enumerate}
\item[(i)] $\,|\mathbf E V(x)|\le\big(\alpha\kappa^{-2}_{\alpha}-\epsilon\big)\lfloor x\rfloor^{-\alpha}$\, for some $\epsilon>0$ and for every $x\ne\e$\,,
\item[(ii)] $\,|\mathbf E(\mathbf E V)(x)|\le \,B\,\lfloor x\rfloor^{-\alpha}\,$ for some positive constant $B$ and for every $x\ne\e$\,.
\end{enumerate}
Then the self-adjoint operator $H_\alpha:=L^\alpha+V$ has purely absolutely spectrum.
\end{Theorem}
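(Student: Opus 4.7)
The plan is to verify the hypotheses of the commutator positivity criterion reviewed in subsection \ref{vitalis}, taking as conjugate operator $A$ the self-adjoint generator of the natural dilation group on $\G$. The dilations $\delta_t$ scale the $k$-th layer $\mathfrak v_k$ by $t^k$, and their unitary implementation on $\H=L^2(\G)$ is $(U_t f)(x):=t^{M/2}f(\delta_t x)$. The associated generator $A$ has the property that, on a suitable core of smooth functions,
\begin{equation*}
i[\,\cdot\,,A]\;\text{acts on }\;V\;\text{as multiplication by}\;\mathbf E V,
\end{equation*}
while $L^\alpha$ is homogeneous of degree $\alpha$ under $U_t$\,, so that $i[L^\alpha,A]=\alpha L^\alpha$.

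With $A$ chosen, the formal computation gives
\begin{equation*}
i[H_\alpha,A]\;=\;\alpha L^\alpha+\mathbf E V.
\end{equation*}
The next step is to invoke the Hardy inequality that is packaged in the constant $\kappa_\alpha$: from $\|\lfloor \cdot\rfloor^{-\alpha/2}L^{-\alpha/2}\|\le\kappa_\alpha$ one obtains, setting $u=L^{-\alpha/2}f$, the form bound $\langle\lfloor\cdot\rfloor^{-\alpha}u,u\rangle\le\kappa_\alpha^{2}\langle L^\alpha u,u\rangle$. Combining this with assumption (i) yields
\begin{equation*}
|\langle \mathbf E V\,u,u\rangle|\le\(\alpha\kappa_\alpha^{-2}-\epsilon\)\langle\lfloor\cdot\rfloor^{-\alpha}u,u\rangle\le\(\alpha-\epsilon\kappa_\alpha^{2}\)\langle L^\alpha u,u\rangle,
\end{equation*}
hence the strict commutator positivity
\begin{equation*}
\langle i[H_\alpha,A]u,u\rangle\;\ge\;\epsilon\,\kappa_\alpha^{2}\,\langle L^\alpha u,u\rangle
\end{equation*}
on the appropriate form domain. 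Since $L^\alpha$ is strictly positive on the orthogonal complement of its kernel (and $L^\alpha$ has no kernel in $L^2(\G)$, there being no nonzero constant in $L^2$), this is the desired lower bound.

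To apply the abstract theorem from \cite{BKM}, I also need a $C^2(A)$-type regularity of $H_\alpha$ with respect to $A$. The $L^\alpha$ part is harmless, being exactly homogeneous: iterated commutators with $A$ reproduce multiples of $L^\alpha$. For the potential part the iterated commutator is $\mathbf E(\mathbf E V)$, and assumption (ii) together with the same Hardy bound gives $|\langle \mathbf E(\mathbf E V)u,u\rangle|\le B\kappa_\alpha^{2}\langle L^\alpha u,u\rangle$, so the double commutator is $L^\alpha$-bounded in the form sense. The boundedness of $V$ and of $\mathbf E V$ ensures that the corresponding operators extend from the natural core to the domain of $L^\alpha$ in the way demanded by the method, so the hypotheses of the abstract Kato--Putnam extension can be verified.

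The main obstacle I anticipate is not the algebraic commutator identity — which is essentially dictated by dilation covariance — but rather the functional-analytic bookkeeping: choosing a core on which all commutators are realized as genuine operators (not merely as sesquilinear forms), ensuring that $H_\alpha$ belongs to the regularity class required by the theorem of \cite{BKM}, and handling the fact that the positivity $\epsilon\kappa_\alpha^{2}L^\alpha$ degenerates at the bottom of the spectrum, which should nevertheless be absorbed by the abstract statement because $0$ is not an eigenvalue of $L^\alpha$. Once these points are settled, the conclusion $\H_{H_\alpha}^{\rm s}=\{0\}$ follows directly from the abstract criterion applied to the pair $(H_\alpha,A)$.
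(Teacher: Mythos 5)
Your proposal follows essentially the same route as the paper: the dilation generator as (weakly) conjugate operator, the commutator $\alpha L^\alpha\mp\mathbf EV$, the Hardy inequality with constant $\kappa_\alpha$ combined with hypothesis (i) to get strict positivity of the commutator form, and hypothesis (ii) plus the same Hardy bound to control the second commutator $\alpha^2L^\alpha+\mathbf E(\mathbf EV)$ in the form sense. The functional-analytic bookkeeping you flag is exactly what the paper supplies (invariance of $\H^{\alpha/2}(\G)$ under the dilation group, identification of $\dot\K_\alpha$ with the homogeneous Sobolev space $\dot\H^{\alpha/2}(\G)$), and your observation that the degeneracy of the lower bound $\epsilon\kappa_\alpha^2 L^\alpha$ is harmless is precisely the point of the weakly conjugate operator method, so the argument is correct.
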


\begin{Remark}\label{simplest}
{\rm As it will be clear from the proof, it is not necessary for $V$ and $\mathbf EV$ to be bounded; some relative boundedness condition would be enough. We allowed this simplifications, being mainly interested in the behavior of the potential at infinity.
}
\end{Remark}

\begin{Remark}\label{zimplest}
{\rm The simplest Carnot groups are the Abelian groups $\G=\R^m$; in this case $M=m$. There is no loss of generality in choosing the (elliptic) Laplace operator $\Delta$ corresponding to the canonical basis of $\R^m$. One can treat $H_\alpha:=(-\Delta)^{\alpha/2}+V$ for $\alpha<m$\,. Results in this simple case can be found in \cite{BKM,BM}. If $\alpha=2$ one needs $m\ge 3$\,; taking $\lfloor x\rfloor=|x|$\,, the usual Euclidean norm, one has $\kappa_2=\frac{2}{m-2}$\,. For $m=1,2$ the results fail: in this case, if $V$ is non-trivial smooth negative and compactly supported, then for every $\eta>0$ the operator $-\Delta+\eta V$ has at least a (negative) eigenvalue. Of course, Hardy's estimate also fails in this case.
}
\end{Remark}

\begin{Remark}\label{incep}
{\rm We describe now the behavior at infinity required by the conditions (i) and (ii), using the $\R$-dilations of the Carnot group (see subsection \ref{vitanidis}).
Let us say that the function $V$ is {\it admissible} if it satisfies the hypohesis of the Theorems. If this is the case and $U$ is a homogeneous function of degree $0$ (i.e. $U\circ{\sf dil}_t=U$ for every $t\in\R$)\,, then $V+U$ is also admissible, because $\mathbf EU=0$\,. So it is clear that for a potential $V$ to be admissible it is not at all necessary to decay at infinity.

On the other hand, assume that a function $F\in C^\infty(\G\setminus\{\e\})$ satisfies $|(\mathbf E F)(x)|\le\frac{D}{\lfloor x\rfloor^\alpha}$ for every $x\ne\e$ and set $\Omega:=\{x\in\G\mid \lfloor x\rfloor=1\}$ for the unit quasi-sphere associated to the homogeneous quasi-norm. For $r>0$ let us set $r\cdot \o:={\sf dil}_{\log r}(\o)$\,. Then it is easy to show that for every $\omega\in\Omega$ there exists $f(\omega):=\lim_{r\to\infty}F(r\cdot \o)$ and one has $|F(r\cdot \o)-f(\o)|\le\frac{D}{\alpha}r^{-\alpha}$. The starting point is to write for $t_0,t_1\in\R$
$$
F\big(e^{t_1}\cdot\o\big)-F\big(e^{t_0}\cdot\o\big)=\int_{t_0}^{t_1}\!\frac{d}{ds}F\big(e^{s}\cdot\o\big)ds=\int_{t_0}^{t_1}\!(\mathbf EF)\big(e^{s}\cdot\o\big)ds\,;
$$
then one has to use the assumption and the homogeneity of the quasi-norm. This has to be applied to $F=V$ and to $F=\mathbf EV$\,. 
The required velocity of convergence towards the radial limits increases with $\alpha$\,; the convergence can be very slow if $\alpha$ is close to zero. Note, however, that we only described necessary conditions; (i) and (ii) contain more information about the oscillations of the function $V$.
}
\end{Remark}

\section{Proof of the main result}\label{firtanunus}

\subsection{The method of the weakly conjugate operator}\label{vitalis}

We present briefly a procedure \cite{BKM,BM}, called {\it the method of the weakly conjugate operator}, to show that that a given self-adjoint operator has no singular spectrum. The method also yields useful estimates on the resolvent family of the operator; this will be treated in a broader setting elsewhere.

\smallskip
Let $H$ be a self-adjoint densely defined operator in the Hilbert space $(\H,\<\cdot,\cdot\>)$\,, with domain ${\rm Dom}(H):=\mathcal G^2$, form domain $\mathcal G^1$ and spectral measure $E_H$\,. In $\mathcal G^1$ and $\mathcal G^2$ one considers the natural graph norms. By duality and interpolation one gets a scale of Hilbert spaces $\big\{\mathcal G^\si\mid \si\in[-2,2]\big\}$ where, for example, $\mathcal G^{-2}$ is the topological anti-dual of $\mathcal G^2$ (formed of anti-linear continuous functionals on $\mathcal G^2$\,). For positive $\si$\,, the norm in $\mathcal G^\si$ is $\p\!\cdot\!\p_\si:=\p\!({\rm Id}+|H|^2)^{\si/2}\cdot\!\p$\,. In particular, one has the continuous dense embeddings
\begin{equation*}\label{onehas}
\mathcal G^2\hookrightarrow\mathcal G^1\hookrightarrow\H\equiv\mathcal G^0\hookrightarrow\mathcal G^{-1}\hookrightarrow\mathcal G^{-2}.
\end{equation*}
Besides the scalar products inducing the new norms, one has various convenient extensions of the initial scalar product; the most useful will be $\mathcal G^1\times\mathcal G^{-1}\ni(u,w)\to\<u,w\>:=w(u)\in\mathbb C$\,; see \eqref{sancheliu} for instance. By duality and interpolation, $H$ extends to an element of $\mathbb B\big(\mathcal G^\si,\mathcal G^{\si-2}\big)$ for every $\si\in[0,2]$\,; in particular (abuse of notation) one has $H\in\mathbb B\big(\mathcal G^1,\mathcal G^{-1}\big)$\,.

\smallskip
Assume that we are given a unitary strongly continuous $1$-parameter group in $\H$\,, i.e. a family $\,W\equiv\big\{W(t)\mid t\in\R\big\}$ where each $W(t)$ is a unitary operator in $\H$\,, the mapping $\,t\to W(t)u\in\H$ is continuous for every $u\in\H$ and $W(s)W(t)=W(s+t)$\,, $\forall\,s,t\in\R$\,.
It is a standard fact that, if each $W(t)$ leaves $\mathcal G^1$ invariant, then (by duality and intepolation) one gets a $C_0$-group $W_\si$ in each Hilbert space $\mathcal G^\si$ for $\si\in[-1,1]$\,, with obvious coherence properties. In general, only $W_0\equiv W$ is unitary. We define $A$ to be the self-adjoint infinitesimal generator of $W$ (such that $W(t)=e^{itA}$\,, by Stone's Theorem). 

\begin{Definition}\label{classc1}
\begin{enumerate}
\item[(a)]
We say that $H$ is {\rm of class $C^1$ with respect to the group $W$} if $\,W(t)\mathcal G^1\subset\mathcal G^1$ for every $t\in\R$ and the map
\begin{equation*}\label{brima}
\R\ni t\to W_{-1}(-t)HW_1(t)\in\mathbb B\big(\mathcal G^1,\mathcal G^{-1}\big)
\end{equation*}
is differentiable. We write $H\in C^1\big(W;\mathcal G^1,\mathcal G^{-1}\big)$ and use the notation
\begin{equation*}\label{derivate}
K:=\frac{d}{dt}\Big\vert_{t=0}\big[W_{-1}(-t)HW_1(t)\big]\in\mathbb B\big(\mathcal G^1,\mathcal G^{-1}\big)\,.
\end{equation*} 
\item[(b)] We say that $A$ is {\rm weakly conjugate to $H$} if $\,H\in C^1\big(W;\mathcal G^1,\mathcal G^{-1}\big)$ and $K>0$\,, meaning that
\begin{equation*}\label{ozitiv}
\<u,Ku\>>0\,,\quad\forall\,u\in\mathcal G^1\setminus\{0\}\,.
\end{equation*}
\end{enumerate}
\end{Definition}

\begin{Remark}\label{observ}
{\rm There is a way to give $K$ a rigorous meaning as a commutator $K=[H,iA]$\,. Defining $A_1$ to be the infinitesimal generator of the $C_0$-group $W_1$ (it will be a closed operator in $\mathcal G^1$ with domain $D(A_1)$ and a restriction of $A$) one has
\begin{equation}\label{niu}
\<u,Kv\>=\big\<Hu,iA_1v\big\>-\big\<iA_1u,Hv\big\>\,,\quad\forall\,u,v\in D(A_1)
\end{equation}
and the condition $\,H\in C^1\big(W;\mathcal G^1,\mathcal G^{-1}\big)$ is equivalent to the continuity of the sesquilinear form \eqref{niu} with respect to the topology of $\,\mathcal G^1\times\mathcal G^1$.
}
\end{Remark}

To make our method work, one needs an extra requirement. First define the new Hilbert space $\dot\K$ to be the completion of $\mathcal G^1$ in the norm
\begin{equation}\label{sancheliu}
\p\!u\!\p_{\dot\K}\,:=\<u,Ku\>^{1/2}.
\end{equation}
The space $\dot\K$ should be seen a the "homogeneus Sobolev space" associated to $K^{1/2}$\,; it contains $\mathcal G^1$ continuously and densely. Note that in general (i.e.\! when $K$ does not have stronger positivity properties) $\dot\K$ and $\H$ are not comparable.

\begin{Remark}\label{obserf}
{\rm The topological anti-dual $\dot\K^*\hookrightarrow\mathcal G^{-1}$ of $\dot\K$ can be identified with the completion of $K\mathcal G^1\subset\mathcal G^{-1}$ in the Hilbert norm $\p\!w\!\p_{\dot\K^*}:=\<K^{-1}w,w\>^{1/2}$. Then $K$ extends to a unitary operator $\dot\K\mapsto\dot\K^*$.
}
\end{Remark}

Assume now that the $C_0$-group $W_1$ extends to a $C_0$-group $\dot W_1$ in $\dot\K$\,. This is equivalent to each $W_{-1}(t)\in\mathbb B\big(\mathcal G^{-1}\big)$ leaving $\dot\K^*$ invariant and thus yielding a $C_0$-group $\dot W_{-1}$ in $\dot\K^*$. Then it makes sense to ask $K\in C^1\big(W;\dot\K,\dot\K^*\big)$\,; this simply means that the mapping $\,t\to \dot W_{-1}(-t)K\dot W_{1}(t)\!\in\mathbb B\big(\dot\K,\dot\K^*\big)$ is differentiable and it is equivalent to the fact that the second commutator $[K,iA]$\,, first defined as a sesquilinear form on the domain of $A$ in $\dot\K$\,, extends continuously to $\dot\K\times\dot\K$\,. 

\smallskip
Now we can state the abstract spectral result, using the notions and notations above.

\begin{Theorem}\label{eorem}
If $A$ is weakly conjugate to the self-adjoint operator $H$ and the commutator $K\equiv[H,iA]$ belongs to $C^1\big(W;\dot\K,\dot\K^*\big)$\,, then $H$ has purely absolutely continuous spectrum.
\end{Theorem}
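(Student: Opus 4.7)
The plan is to carry out the Putnam--Kato smooth-operator paradigm in the unbounded-commutator version of \cite{BKM}. The goal is the uniform resolvent estimate: for every $u\in\mathcal G^{1}$ and every $z\in\mathbb C\setminus\R$,
\begin{equation*}
\im\<u,(H-z)^{-1}u\>\;\le\;C\,\|u\|_{\dot\K}^{2},
\end{equation*}
with $C$ independent of $z$. Once such a bound is available, the Poisson-integral representation $\im\<u,(H-\lambda-i\mu)^{-1}u\>=\pi(P_{\mu}*d\mu_{H}^{u})(\lambda)$ forces $\mu_{H}^{u}$ to have a bounded density, hence to be purely absolutely continuous. Since $\mathcal G^{1}$ is dense in $\H$ and $\H_{H}^{\mathrm{ac}}$ is closed, this gives $\H_{H}^{\mathrm s}=\{0\}$.

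The central computation uses an auxiliary time variable. For $u\in\mathcal G^{1}$ and $z\in\mathbb C\setminus\R$, set $\varphi(t,z):=\<W(t)u,(H-z)^{-1}W(t)u\>$. The hypothesis $H\in C^{1}(W;\mathcal G^{1},\mathcal G^{-1})$ makes the identity
\begin{equation*}
\partial_{t}\varphi(t,z)\;=\;-\<W(t)u,(H-z)^{-1}K(H-z)^{-1}W(t)u\>
\end{equation*}
rigorous pointwise in $t$. Integrating from $0$ to $T$ and using the positivity $K>0$ yields a Kato-type smoothness inequality of the form $\int_{0}^{T}(\cdots)\,dt\le|\varphi(0,z)|+|\varphi(T,z)|$; by itself this is too weak, because of the only trivial $1/|\im z|$-dependent bound on $\varphi$.

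The decisive step, for which the second hypothesis $K\in C^{1}(W;\dot\K,\dot\K^{*})$ is used, is to integrate by parts in $t$ to transfer one time-derivative onto $K$, producing a boundary term plus a term featuring the second commutator $[K,iA]$. The $C^{1}$-assumption says exactly that $[K,iA]$ extends to a bounded form on $\dot\K\times\dot\K$; combined with the weak positivity of $K$, this closes a Gronwall-type inequality that absorbs the $|\im z|$-dependence and delivers the target estimate at $t=0$.

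The main obstacle I anticipate is the careful bookkeeping across the Hilbert-space scale: $H$ maps $\mathcal G^{1}\to\mathcal G^{-1}$, $K$ is a form on $\mathcal G^{1}\times\mathcal G^{1}$ that extends to a unitary $\dot\K\to\dot\K^{*}$, and $\dot\K$ is in general not comparable with $\H$. Each formal manipulation above must be checked for consistency: the compositions $(H-z)^{-1}K(H-z)^{-1}$ and the double-commutator term have to land in appropriate spaces, and the $C_{0}$-groups $W_{1},W_{-1},\dot W_{1},\dot W_{-1}$ must be commuted with resolvents coherently across the scale. Once this bookkeeping is set up, the argument reduces to the differential-inequality method of \cite{BKM}.
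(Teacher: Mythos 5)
First, a point of reference: the paper itself does not prove Theorem \ref{eorem}; it is imported from \cite{BKM,BM}, where it is obtained as a corollary of a global limiting absorption principle of the form $\sup_{\lambda\in\R,\,\mu>0}\big|\langle (H-\lambda\mp i\mu)^{-1}v,v\rangle\big|\le C\Vert v\Vert_{\A}^2$, with $\A$ the domain of the generator $\dot A$ of the extended group $\dot W_{-1}$ in $\dot\K^*$, normed by $\big(\Vert v\Vert_{\dot\K^*}^2+\Vert \dot Av\Vert_{\dot\K^*}^2\big)^{1/2}$. Your overall architecture --- a $z$-uniform resolvent bound on a dense set, then the Poisson-kernel representation of $\im\langle u,(H-\lambda-i\mu)^{-1}u\rangle$ to conclude absolute continuity --- is exactly the right one and matches the cited argument.

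However, there is a genuine error and a genuine gap. The target estimate you state, $\im\langle u,(H-z)^{-1}u\rangle\le C\Vert u\Vert_{\dot\K}^2$, is false already in the model case $H=-\Delta$ on $\R^n$ with $A$ the dilation generator (which satisfies all hypotheses of the theorem): there $K=2H$, so $\Vert u\Vert_{\dot\K}^2=2\langle u,Hu\rangle$, while $\im\langle u,(H-\lambda-i\mu)^{-1}u\rangle=\mu\Vert(H-\lambda-i\mu)^{-1}u\Vert^2$; taking $\widehat u$ normalized and supported in a shell of width $\delta$ about $\{|\xi|^2=\lambda\}$ gives a left-hand side of order $\delta^{-1}$ as $\mu\to0$ against a right-hand side of order $\lambda$, so no uniform constant exists. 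The correct right-hand side must be the $\A$-norm above: the $\dot A$-term is precisely what penalizes spectral concentration, and it is also where the hypothesis $K\in C^1\big(W;\dot\K,\dot\K^*\big)$ is ultimately paid for (one must then separately check that vectors of finite $\A$-norm are dense in $\H$). Two further problems: the quantity $\langle W(t)u,(H-z)^{-1}K(H-z)^{-1}W(t)u\rangle$ has no sign, since the two resolvents are not mutually adjoint, so ``using the positivity of $K$'' requires first taking imaginary parts or symmetrizing; and the decisive step (``integrate by parts in $t$ \dots closes a Gronwall-type inequality that absorbs the $|\im z|$-dependence'') is exactly the hard part of the theorem and is only named, not performed. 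In \cite{BKM} the mechanism is different: one regularizes the resolvent to $R_\epsilon(z)=(H-i\epsilon K-z)^{-1}\in\mathbb B\big(\mathcal G^{-1},\mathcal G^{1}\big)$, proves a quadratic estimate bounding $\Vert R_\epsilon(z)v\Vert_{\dot\K}^2$ by $\epsilon^{-1}\big|\langle v,R_\epsilon(z)v\rangle\big|$, and runs a differential inequality in $\epsilon$ in which the second commutator controls $\tfrac{d}{d\epsilon}\langle v,R_\epsilon(z)v\rangle$. As written, your argument does not yet constitute a proof.
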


\subsection{Dilations}\label{vitanidis}

We prefer to introduce the dilations on the Carnot group $\G$ in terms of the action of the additive group $(\R,+)$\,. Denoting by ${\Aut}(\g)$ the space of automorphisms of the Lie algebra $\g=\mathfrak v_1\oplus\dots\oplus \mathfrak v_r$\,, we set $\,\mathfrak{dil}:\R\to{\Aut}(\g)$ by
\begin{equation*}\label{dila}
\mathfrak{dil}_t\big(Z_{(1)},Z_{(2)},\dots,Z_{(r)}\big):=\big(e^t Z_{(1)},e^{2t}Z_{(2)}\dots,e^{rt}Z_{(r)}\big)\,,\quad t\in\R\,,\ Z_{(k)}\in\mathfrak v_k\,,\ 1\le k\le r\,.
\end{equation*}
The elements $X_1,\dots,X_{m_1}$ are homogeneous of degree 1\,; more generally one has 
\begin{equation*}\label{stoarpha}
\mathfrak{dil}_t(X_j)=e^{\nu_j t} X_j\,,\quad t\in\R\,,\ 1\le j\le m\,.
\end{equation*}
Then we transfer the dilations to the group by 
\begin{equation*}\label{ansfer}
{\sf dil}_t(x):={\sf exp}\big[\mathfrak{dil}_t({\sf log}\,x)\big]\,,\quad x\in\G\,,\ t\in\R\,.
\end{equation*}
One then has natural actions of $(\R,+)$ on spaces of functions or distributions on $\g$ or on $\G$\,, of the form $f\to f\circ\mathfrak{dil}_t$ or $\,u\to u\circ{\sf dil}_t$\,. For the sake of unitarity, for $u\in L^2(\G)$ we set
\begin{equation*}\label{responsabil}
\big[{\sf Dil}(t)u\big](x):=e^{\frac{Mt}{2}}\big(u\circ{\sf dil}_t\big)(x)=e^{\frac{Mt}{2}}u\big({\sf dil}_t(x)\big)\,,
\end{equation*}
getting a unitary strongly continuous $1$-parameter group. Its infinitesimal generator, given by Stone's Theorem, is defined by ${\sf Dil}(t)=e^{itA}$\,. It is easy to see that it is given on $C^\infty_0(\G\setminus\{\e\})$ by
\begin{equation}\label{egat}
(Au)(x)=\sum_{j=1}^m \nu_j {\sf x}_j\big[(-iX_j)u\big](x)-\frac{iM}{2}u(x)\,.
\end{equation}
For the computation, besides the Dominated Convergence Theorem, one needs to note that 
\begin{equation*}\label{need}
{\sf dil}_t\Big(\exp\big[\sum_{j=1}^m{{\sf x}_jX_j}\big]\Big)=\exp\big[\sum_{j=1}^m{e^{\nu_j t}{\sf x}_jX_j}\big]
\end{equation*} 
and then use $\,X_j({\sf x}_ju)=u+{\sf x}_jX_ju$ and $\,\sum_{j=1}^m\nu_j=\sum_{k=1}^r km_k=M$. The formula \eqref{egat} can be written as
\begin{equation*}\label{generatoru}
A=\frac{1}{i}\sum_{j=1}^m \nu_j{\sf x}_j X_j+\frac{M}{2i}{\rm id}=\frac{1}{2i}\sum_{j=1}^m \nu_j\big({\sf x}_j X_j+X_j{\sf x}_j\big)=\frac{1}{2i}(\mathbf E-\mathbf E^*)=\im\mathbf E\,,
\end{equation*}
in terms of {\it the Euler operator} $\,\mathbf E:=\sum_{j=1}^m \nu_j{\sf x}_j X_j$ that has alredy been used in Theorem \ref{maine}. The operators $\mathbf E$ and $A$ depend on the homogeneous structure of $\G$\,, but not on the chosen basis.

\subsection{Proof of Theorem \ref{main}}\label{intrebare}

{\bf 1.}
Associated to the (positive and injective) operator $-\Delta$ one has the chain of Sobolev spaces $\big\{\H^\si(\G)\mid\si\in\R\big\}$\,, where $\H^0(\G)=L^2(\G)$ and for $\si>0$ the space $\H^\si(\G)$ is the domain of $(-\Delta)^{\si/2}$ with the graph norm.
Since $V$ is bounded (relative boundedness with subunitary relative bound would be enough), the domain of $H_\alpha=(-\Delta)^{\alpha/2}+V$ is also the  Sobolev space $\H^\alpha(\G)$ and the graph norm of $H_\alpha$ is equivalent to the graph norm of $L^\alpha=(-\Delta)^{\alpha/2}$. From the identity 
\begin{equation}\label{obama}
{\sf Dil}(-t)(-\Delta){\sf Dil}(t)=e^{2t}(-\Delta)
\end{equation}
it follows immediatly that ${\sf Dil}(-t)L^\alpha{\sf Dil}(t)=e^{\alpha t}L^\alpha$ and that $\mathcal G^1\equiv\H^{\alpha/2}(\G)$ is left invariant by the unitary group ${\sf Dil}$\,; this is also true for the other Sobolev spaces. One has for every $t\in\R$
\begin{equation*}\label{trump}
{\sf Dil}(-t)V{\sf Dil}(t)=V\circ{\sf dil}_{-t}\,.
\end{equation*}
Then, straightforwardly, $\,H_\alpha\in C^1\big({\sf Dil};\H^{\alpha/2}(\G),\H^{-\alpha/2}(\G)\big)$ and one has 
\begin{equation*}\label{gomutator}
K_\alpha\equiv[H_\alpha,iA]=\alpha L^\alpha-\mathbf EV\in\mathbb B\big[\H^{\alpha/2}(\G),\H^{-\alpha/2}(\G)\big]\,.
\end{equation*} 
Actually $K_\alpha$ even belongs to $\mathbb B\big[\H^\alpha(\G),L^2(\G)\big]$\,, which is stronger, but we are not going to need this.

\smallskip
\noindent
{\bf 2.}
Recalling the notation $\kappa_{2\alpha}=\big\Vert\lfloor x\rfloor^{-\alpha}(-\Delta)^{-\alpha/2}\big\Vert_{L^2}$\,, an important tool subsequently will be the Hardy type inequality \cite{CCR}
\begin{equation}\label{hardy}
\kappa_{\alpha}^2\int_\G|(L^{\alpha/2} u)(x)|^2 dx\ge\int_\G\frac{|u(x)|^2}{\lfloor x\rfloor^{\alpha}}dx\,,
\end{equation}
valid for every $u\in C^\infty_0(\G)$ if $\alpha\in(0,M/2)$\,. It can be rephrased as
\begin{equation}\label{reads}
\big\<L^\alpha u,u\big\>\equiv\<u,u\>_{\dot\H^{\alpha/2}}\ge\kappa_{\alpha}^{-2}\big\< \lfloor x\rfloor^{-\alpha}u,u\big\>\,,\quad\forall\,u\in C^\infty_0(\G)\,.
\end{equation}
But this is easily extended to $u\in\H^{\alpha/2}(\G)\equiv\mathcal G^1$\,, since $C^\infty_0(\G)$ is dense both in this Sobolev space and in the domain of the operator of multiplication by the function $x\to\lfloor x\rfloor^{-\alpha}$.
By our assumption (i) and by the extension of \eqref{reads}, if $u\in\H^{\alpha/2}(\G)\setminus\{0\}$ then
$$
\begin{aligned}
\big\<K_\alpha u,u\big\>&=\big\<[\alpha L^\alpha-\mathbf EV]u,u\big\>\\
&\ge \alpha\big\<L^\alpha u,u\big\>-\big\<|\mathbf EV|u,u\big\>\\
&\ge \alpha\big\<L^\alpha u,u\big\>-\Big[\alpha\kappa_{\alpha}^{-2}-\epsilon\Big]\big\<\lfloor x\rfloor^{-\alpha}u,u\big\>\\
&\ge\epsilon\big\<\lfloor x\rfloor^{-\alpha}u,u\big\> >0\,.\\
\end{aligned}
$$
So we showed that $A$ is weakly conjugate to $H$.

\smallskip
\noindent
{\bf 3.}
Let us focus now on the space $\dot\K\equiv\dot\K_\alpha$\,. We define {\it homogeneous Sobolev spaces} $\big\{\dot\H^\si(\G)\mid\si\in\R\big\}$\,. For $\si\ge 0$\,, it is the Hilbert space obtained by completing $\H^{\si}(\G)$ (or $C_0^\infty(\G)$) in the norm
\begin{equation*}\label{homosob}
\p\!u\!\p_{\dot\H^\si}:=\p\!(-\Delta)^{\si/2}u\!\p_{L^2}=\p\! L^{\si}u\!\p_{L^2}.
\end{equation*}
Thus $L^2(\G)=\H^0(\G)=\dot\H^0(\G)$\,. This second chain of Hilbert spaces is no longer increasing. Using Hardy's inequality one proves immediately that $\dot{\mathcal K}_\alpha\cong\dot{\H}^{\alpha/2}(\G)$ and $\dot{\mathcal K}_\alpha^*\cong\dot{\H}^{-\alpha/2}(\G)$ (same vector spaces with equivalent norms)\,. Then \eqref{obama} shows that ${\sf Dil}$ does extend to a $C_0$-group in $\dot{\H}^{\alpha/2}(\G)$\,, and actually to all the other homogeneous Sobolev spaces.
The remaining condition $K_\alpha\in C^1\big(W;\dot\K_\alpha,\dot\K_\alpha^*\big)$ is equivalent to $K_\alpha\in C^1\big(W;\dot{\H}^{\alpha/2}(\G),\dot{\H}^{-\alpha/2}(\G)\big)$ and boils down finally to the fact that the second commutator 
\begin{equation*}\label{siecund}
[K_\alpha,iA]=\alpha^2 L^\alpha+\mathbf E(\mathbf E V)
\end{equation*} 
is dominated by $C L^\alpha$ for some positive constant $C$\,. The assumption (ii) and the Hardy inequality, applied again, finish the proof.

\section{Other spectral results}\label{umilire}

Our proof of Theorem \ref{maine} relies on the abstract weakly conjugate method, on the dilation structure of the Carnot group and on the Hardy type estimate. We are going to indicate three other results, relying on different but connected inequalities. Higher order operators with variable coefficients in the principal part will hopefully be the topic of a subsequent paper.

\smallskip
It is known that there is a homogeneous quasi-norm $\,\rho:\G\to[0,\infty)$\,, continuous and smooth outside $\{\e\}$\,, such that, for some positive constant $C_M$\,,  the function $C_M\rho^{2-M}$ is a fundamental solution of $-\Delta$\,.

\begin{Theorem}\label{main}
Assume that $M\ge 3$\,. Let $\,V:\G\to\mathbb R$ be a bounded function of class $C^2(\G\setminus\{\e\})$\,,  with $\mathbf EV$ bounded, such that
\begin{enumerate}
\item[(i)] $\,|\mathbf E V|\le\Big[\frac{(M-2)^2}{2}-\epsilon\Big]\frac{(\nabla\rho)^2}{\rho^2}$\, for some $\epsilon>0$\,,
\item[(ii)] $\,|\mathbf E(\mathbf E V)|\le \,B\,\frac{(\nabla\rho)^2}{\rho^2}\,$ for some positive constant $B$\,.
\end{enumerate}
Then the self-adjoint operator $H:=-\Delta+V$ has purely absolutely spectrum.
\end{Theorem}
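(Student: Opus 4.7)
The proof runs in parallel with that of Theorem \ref{maine}, the sole new ingredient being a sharp Hardy-type inequality tailored to the quasi-norm $\rho$ whose power $C_M \rho^{2-M}$ is a fundamental solution of $-\Delta$. I would again apply the weakly conjugate operator method (Theorem \ref{eorem}) with the dilation group ${\sf Dil}$ and its generator $A = \im \mathbf E$. Here $\alpha = 2$, so the form domain is $\mathcal G^1 = \H^1(\G)$.

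First, one checks that $H = -\Delta + V$ belongs to $C^1\bigl({\sf Dil}; \H^1(\G), \H^{-1}(\G)\bigr)$. Since $V$ and $\mathbf EV$ are bounded, this follows from the scaling identity ${\sf Dil}(-t)(-\Delta){\sf Dil}(t) = e^{2t}(-\Delta)$ (which ensures ${\sf Dil}$ preserves $\H^1(\G)$) together with ${\sf Dil}(-t) V {\sf Dil}(t) = V \circ {\sf dil}_{-t}$. Differentiation at $t=0$ gives
\begin{equation*}
K := [H, iA] = 2(-\Delta) - \mathbf E V \in \mathbb B\bigl[\H^1(\G), \H^{-1}(\G)\bigr].
\end{equation*}

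The key step is the sharp $\rho$-weighted Hardy inequality
\begin{equation*}
\bigl\< -\Delta u, u\bigr\> \,\ge\, \Bigl(\tfrac{M-2}{2}\Bigr)^{\!2} \Bigl\< \tfrac{(\nabla\rho)^2}{\rho^2} u, u\Bigr\>, \qquad u \in \H^1(\G),
\end{equation*}
which is the substitute, specifically adapted to the $\mathcal L$-gauge, for the inequality \eqref{hardy} used in the proof of Theorem \ref{maine}. Its availability when $M \ge 3$ is a consequence of integrating by parts against the fundamental solution $C_M \rho^{2-M}$. Combined with assumption (i), for every $u \in \H^1(\G) \setminus \{0\}$ one obtains
\begin{equation*}
\bigl\< K u, u \bigr\> \,\ge\, 2 \bigl\< -\Delta u, u\bigr\> - \Bigl[\tfrac{(M-2)^2}{2} - \epsilon\Bigr] \Bigl\<\tfrac{(\nabla\rho)^2}{\rho^2} u, u\Bigr\> \,\ge\, \epsilon \Bigl\<\tfrac{(\nabla\rho)^2}{\rho^2} u, u\Bigr\> \,>\, 0,
\end{equation*}
so $A$ is weakly conjugate to $H$. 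The same pair of inequalities identifies $\dot{\mathcal K}$ with the homogeneous Sobolev space $\dot\H^1(\G)$ (equivalent norms): the Hardy inequality gives the $\dot\H^1$-bound on $\dot{\mathcal K}$, while the upper bound on $|\mathbf EV|$ yields the converse control. The scaling identity then ensures that ${\sf Dil}$ extends to a $C_0$-group on $\dot\H^1(\G)$. A second differentiation yields
\begin{equation*}
[K, iA] = 4(-\Delta) + \mathbf E(\mathbf E V),
\end{equation*}
and assumption (ii) together with the Hardy inequality once more give $|\bigl\<\mathbf E(\mathbf E V) u, u\bigr\>| \le \tfrac{4B}{(M-2)^2} \bigl\< -\Delta u, u\bigr\>$. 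Hence $[K, iA]$ is dominated by a multiple of $-\Delta$ and extends continuously to $\dot\H^1(\G) \times \dot\H^1(\G)$, giving $K \in C^1\bigl({\sf Dil}; \dot{\mathcal K}, \dot{\mathcal K}^*\bigr)$. Theorem \ref{eorem} then delivers the conclusion.

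The main obstacle is securing the sharp Hardy inequality with weight $(\nabla\rho)^2/\rho^2$ and optimal constant $(M-2)^2/4$. Unlike the quasi-norm weight $\lfloor\cdot\rfloor^{-\alpha}$ of Theorem \ref{maine}, which rests on Riesz-type bounds from \cite{CCR}, the present weight is intrinsically tied to the harmonic structure of the sublaplacian, and the proof requires a computation with $\rho^{2-M}$ that forces the condition $M \ge 3$. Once this analytic input is available, the spectral argument is mechanical and essentially a direct transcription of the proof of Theorem \ref{maine}.
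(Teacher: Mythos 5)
Your proposal is correct and follows exactly the route the paper takes: the paper's proof of Theorem \ref{main} consists precisely of rerunning the argument for Theorem \ref{maine} with $\alpha=2$, replacing the quasi-norm Hardy inequality \eqref{reads} by the sharp inequality $-\Delta\ge\big(\frac{M-2}{2}\big)^2\frac{(\nabla\rho)^2}{\rho^2}$ (which the paper cites from the literature rather than deriving from the fundamental solution, but that is immaterial). Your commutator computations, the positivity estimate yielding $\<Ku,u\>\ge\epsilon\big\<\frac{(\nabla\rho)^2}{\rho^2}u,u\big\>$, and the treatment of the second commutator all match the paper's argument.
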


\begin{proof}
The proof is similar with the one above, replacing \eqref{reads} by the inequality  \cite{D'A,Ko,Li}
\begin{equation*}\label{hardi}
-\Delta\ge \Big(\frac{M-2}{2}\Big)^2\frac{(\nabla\rho)^2}{\rho^2}\,,
\end{equation*}
in which the constant is optimal.
\end{proof}

\begin{Remark}\label{duduta}
{\rm It is known that the homogeneous quasi-norms are equivalent. However, the constants are important in the conditions (i) of Theorems \ref{maine} and \ref{main}.
It is useful to note that the extra factor $\nabla\rho$ is bounded.
}
\end{Remark}

For the next result we need some extra notations, connected to the decomposition \eqref{lie}: For $X\in\g$ we set $X=\tilde X+X'$, where $\tilde X$ (the horizontal part) is the component of $X$ in $\mathfrak v_1$ and $X'$ its component in $\mathfrak v':=\mathfrak v_2\oplus\dots\oplus\mathfrak v_r$\,. Then, for $x\in\G$ one sets $\tilde x:={\sf exp}\big[\widetilde{{\sf log}\,x}\big]$\,. In terms of coordinates, if $\,x={\sf exp}\big[\sum_{j=1}^m{\sf x}_jX_j\big]$\,, then $\,\tilde x={\sf exp}\big[\sum_{j=1}^{m_1}{\sf x}_jX_j\big]$\,; thus $\,\tilde x\equiv({\sf x}_1,\dots,{\sf x}_{m_1})$\,. By $\,|\tilde x_{(1)}|:=\big(\sum_{j=1}^{m_1}|{\sf x}_j|^2\big)^{1/2}$ we denote the Euclidean norm.

\begin{Theorem}\label{naim}
For $m_1\ge 3$\,, let $\,V:\G\to\mathbb R$ be a bounded function of class $C^2(\G\setminus\{\e\})$\,, with $\mathbf EV$ bounded, such that
\begin{enumerate}
\item[(i)] $\,|\mathbf E V|\le\Big[\frac{(m_1-2)^2}{2}-\epsilon\Big]\frac{1}{|\tilde x|^{2}}$\, for some $\epsilon>0$\,,
\item[(ii)] $\,|\mathbf E(\mathbf E V)|\le \frac{B}{|\tilde x|^{2}}\,$ for some positive constant $B$\,.
\end{enumerate}
Then the self-adjoint operator $H:=-\Delta+V$ has purely absolutely spectrum.
\end{Theorem}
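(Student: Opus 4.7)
The plan is to run the weakly conjugate operator argument from the proof of Theorem \ref{maine} specialized to $\alpha = 2$, using the same dilation group ${\sf Dil}$ and conjugate operator $A = \im \mathbf{E}$, and to replace the Hardy inequality \eqref{reads} by the \emph{partial (horizontal) Hardy inequality}
\begin{equation*}
-\Delta \;\geq\; \left(\frac{m_1-2}{2}\right)^{\!2} \frac{1}{|\tilde x|^{2}} \qquad \text{on } C_0^\infty(\G),
\end{equation*}
valid for $m_1 \geq 3$. The constant is that of the classical Euclidean Hardy inequality in $\R^{m_1}$; the estimate itself is a well-established variant on stratified groups, playing here the same role that \eqref{hardy} plays in Theorem \ref{maine} and that the Kombe--Li type estimate plays in Theorem \ref{main}.

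Granting this inequality, the verification is essentially mechanical. Since $V$ is bounded, ${\rm Dom}(H) = \H^2(\G)$ and $\mathcal G^1 = \H^1(\G)$. Exactly as in Section \ref{intrebare}, one checks $H \in C^1({\sf Dil}; \H^1(\G), \H^{-1}(\G))$ with first commutator
\begin{equation*}
K := [H, iA] \;=\; -2\Delta - \mathbf{E}V \;\in\; \B\bigl(\H^1(\G), \H^{-1}(\G)\bigr).
\end{equation*}
For $u \in \H^1(\G) \setminus \{0\}$, hypothesis (i) combined with the horizontal Hardy estimate gives
\begin{align*}
\langle K u, u \rangle
&\geq -2\langle \Delta u, u\rangle - \langle |\mathbf{E}V|\, u, u\rangle \\
&\geq \tfrac{(m_1-2)^2}{2}\,\langle |\tilde x|^{-2} u, u\rangle - \Bigl[\tfrac{(m_1-2)^2}{2} - \epsilon\Bigr] \langle |\tilde x|^{-2} u, u\rangle \\
&= \epsilon\,\langle |\tilde x|^{-2} u, u\rangle \;>\; 0,
\end{align*}
which shows that $A$ is weakly conjugate to $H$.

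For the second-commutator hypothesis of Theorem \ref{eorem}, I would identify $\dot\K \cong \dot\H^1(\G)$ (with equivalent norms) by means of the same horizontal Hardy inequality, and observe that ${\sf Dil}$ extends to a $C_0$-group on $\dot\H^1(\G)$ thanks to \eqref{obama}. A direct computation yields
\begin{equation*}
[K, iA] \;=\; -4\Delta + \mathbf{E}(\mathbf{E}V),
\end{equation*}
and condition (ii) together with the same weighted inequality produces the domination $|\mathbf{E}(\mathbf{E}V)| \leq \tfrac{4B}{(m_1-2)^2}(-\Delta)$, so that $[K, iA]$ extends continuously to a bounded sesquilinear form on $\dot\H^1(\G)$. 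This verifies $K \in C^1({\sf Dil}; \dot\K, \dot\K^*)$, and Theorem \ref{eorem} delivers the conclusion.

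The only genuinely non-mechanical point is the partial Hardy inequality with its sharp constant $\big(\tfrac{m_1-2}{2}\big)^2$; everything else is a transparent substitution of the weight $|\tilde x|^{-2}$ for $\lfloor x\rfloor^{-\alpha}$ in the argument of Section \ref{intrebare}. In particular one wants to confirm that the Euclidean constant on $\R^{m_1}$ is retained when the horizontal gradient is used in place of the full one, which is where sharpness of the hypothesis on $\mathbf{E}V$ enters.
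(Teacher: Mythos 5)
Your proposal is correct and coincides with the paper's own (very terse) proof: the paper likewise runs the Theorem \ref{maine} argument with $\alpha=2$, replacing \eqref{reads} by the horizontal Hardy inequality $\int_\G|\nabla u|^2\,dx\ge\big(\tfrac{m_1-2}{2}\big)^2\int_\G|\tilde x|^{-2}|u|^2\,dx$ of Ruzhansky--Suragan, whose sharp constant is exactly what makes hypothesis (i) yield $\<Ku,u\>\ge\epsilon\<|\tilde x|^{-2}u,u\>$. Your filled-in details (the commutators $K=-2\Delta-\mathbf EV$ and $[K,iA]=-4\Delta+\mathbf E(\mathbf EV)$, and the identification $\dot\K\cong\dot\H^1(\G)$) are all consistent with Section \ref{intrebare}.
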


\begin{proof}
Instead of \eqref{hardy} one uses another Hardy type inequality \cite{RS2}:
\begin{equation*}\label{littlewood}
\int_\G|(\nabla u)(x)|^2 dx\ge\Big(\frac{m_1-2}{2}\Big)^2\int_\G\frac{|u(x)|^2}{|\tilde x|^2}dx\,.
\end{equation*}
Once again the constant is optimal.
\end{proof}

\begin{Remark}\label{dubita}
{\rm The two expressions $\frac{(M-2)^2}{2}\frac{(\nabla\rho)^2}{\rho^2}$ and $\frac{(m_1-2)^2}{2}\frac{1}{|\tilde x|^{2}}$ are incomparable in general. For instance, if $\,{\sf H}_1\equiv\R^3\ni({\sf x}_1,{\sf x}_2,{\sf t})$ is the simplest Heisenberg group, one has $m_1=2\,, m=3\,,M=4$\,, $\,X_1=\partial_1+2{\sf x}_2\partial_{\sf t}$\,, $\,X_2=\partial_2-2{\sf x}_1\partial_{\sf t}$ and 
\begin{equation*}\label{treipatati}
\frac{1}{\rho^2}=\frac{1}{\big[({\sf x}_1^2+{\sf x}_1^2)^2+{\sf t}^2\big]^{1/2}}\,\,,\quad\frac{(\nabla\rho)^2}{\rho^2}=\frac{{\sf x}_1^2+{\sf x}_2^2}{({\sf x}_1^2+{\sf x}_2^2)^2+{\sf t}^2}\,\,,\quad\frac{1}{|\tilde x|^{2}}=\frac{1}{{\sf x}_1^2+{\sf x}_2^2}\,.
\end{equation*}
Note that there is no decay in the variable ${\sf t}$ in the last expression. In this case $\frac{1}{\rho^2}\le\frac{1}{|\tilde x|^2}$ and $\frac{|\nabla\rho|^2}{\rho^2}\le\frac{1}{|\tilde x|^2}$\,, but the constants $\frac{(M-2)^2}{2}$ and $\frac{(m_1-2)^2}{2}$  (deduced from optimal constants in the Hardy inequalities) distroy a direct comparison.
}
\end{Remark}

\begin{Remark}\label{kombin}
{\rm The Heisenberg group allows a simple illustration of a phenomenon which is also present in other situations and which is connected to the independence of $\,|\tilde x|^{-2}$ on the variable $x'\in\mathfrak v':=\mathfrak v_2\oplus\dots\oplus\mathfrak v_r$\,. Assume for example that 
$$
V({\sf x}_1,{\sf x}_2,{\sf t})=\frac{\gamma\,U({\sf t})}{1+{\sf x}_1^2+{\sf x}_2^2}\,\,,\quad\gamma>0\,.
$$ 
Using the obvious formula $\,{\sf x}_1X_1+{\sf x}_2X_2={\sf x}_1\partial_{\sf x_1}+{\sf x}_2\partial_{\sf x_2}\,,$ one gets
$$
(\mathbf E V)({\sf x}_1,{\sf x}_2,{\sf t})=\frac{2\gamma}{1+{\sf x}_1^2+{\sf x}_2^2}\Big[{\sf t}U'({\sf t})-\frac{{\sf x}_1^2+{\sf x}_2^2}{1+{\sf x}_1^2+{\sf x}_2^2}U({\sf t})\Big]\,.
$$
Then the first condition (i) of Theorem \ref{naim} is fulfilled if $U$ and ${\sf t}U'$ are bounded and $\gamma$ is small enough. It is easy to check that the only extra requirement comming from condition (ii) is the boundedness of ${\sf t}^2U''$. Thus the behavior of $V$ in the variable ${\sf t}$ is very general and does not imply a decay of the function $U$ itself.
}
\end{Remark}

\begin{Remark}\label{kokombin}
{\rm Actually Theorems \ref{maine}, \ref{main} and \ref{naim} can be combined in a single one, gaining some generality; this can be easily seen from the proofs. For example, the conditions (i) from the three Theorems can be joint in the single requirement
\begin{equation*}\label{quirement}
|\mathbf E V|\le 2\th_1\kappa^{-2}_{2}\lfloor x\rfloor^{-2}+\theta_2\frac{(M-2)^2}{2}\frac{(\nabla\rho)^2}{\rho^2}+\theta_3\frac{(m_1-2)^2}{2}|\tilde x|^{-2}
\end{equation*}
for some positive constants $\th_1,\th_2,\th_3$ with $\th_1+\th_2+\th_3<1$\,. It is assumed that $M\ge 3$\,. For the condition (ii) the terms $2\kappa^{-2}_{2}\lfloor x\rfloor^{-2}$, $\frac{(M-2)^2}{2}\frac{(\nabla\rho)^2}{\rho^2}$ and $\frac{(m_1-2)^2}{2}|\tilde x|^{-2}$ can be combined with arbitrary (sufficiently large) constants. We kept the statements apart because they are easier to analyse individually.
}
\end{Remark} 

Finally, let us recall another Hardy inequality, on the $(2d+1)$-dimensional Heisenberg group ${\sf H}_d\equiv\R^{2d}\times\R\ni({\sf z},{\sf t})$\,. In this case the special homogeneous quasi-norm $\rho$ has the form
\begin{equation*}\label{solfundam}
\rho({\sf z},{\sf t})=\big(|{\sf z}|^4+{\sf t}^2\big)^{1/4}.
\end{equation*}
It has been shown in \cite{BCG} that 
\begin{equation*}\label{izabel}
(-\Delta)^{\alpha/2}\ge E_\alpha\rho^{-\alpha}
\end{equation*} 
if $\alpha\in(0,2d+2)$\,. It seems that the best constant $E_\alpha$ is unknown. Note that $\alpha\ne 2$ is permitted and that no $\nabla\rho$ factor is needed. Then one easily gets

\begin{Theorem}\label{remain}
Assume that $\alpha<2d+2$\,. Let $\,V:\G\to\mathbb R$ be a bounded function of class $C^2(\G\setminus\{\e\})$\,,  with $\mathbf EV$ bounded, such that
\begin{enumerate}
\item[(i)] $\,|\mathbf E V|\le\big(E_\alpha-\epsilon\big)\rho^2$\, for some $\epsilon>0$\,,
\item[(ii)] $\,|\mathbf E(\mathbf E V)|\le \,B\rho^{-\alpha}\,$ for some positive constant $B$\,.
\end{enumerate}
Then the self-adjoint operator $H:=-\Delta+V$ has purely absolutely spectrum.
\end{Theorem}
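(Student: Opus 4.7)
The strategy is to repeat the three-step argument of subsection \ref{intrebare} almost verbatim, substituting the general Hardy estimate \eqref{reads} by the refined inequality $(-\Delta)^{\alpha/2} \ge E_\alpha \rho^{-\alpha}$ of \cite{BCG}. Since the homogeneous dimension of ${\sf H}_d$ is precisely $M = 2d + 2$, the range $\alpha \in (0, 2d+2)$ places us in the same regime $\alpha \in (0, M)$ as Theorem \ref{maine}. We understand the operator in the statement as $H_\alpha := L^\alpha + V$ (the $-\Delta$ printed in the statement should be $L^\alpha$, since $\alpha$ does not appear in $H$ otherwise); boundedness of $V$ ensures self-adjointness on the Sobolev space $\H^\alpha(\G)$, with graph norm equivalent to that of $L^\alpha$.

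For the first step, I would take the dilation group ${\sf Dil}$ of subsection \ref{vitanidis} as the auxiliary unitary group. The identity \eqref{obama} yields ${\sf Dil}(-t) L^\alpha {\sf Dil}(t) = e^{\alpha t} L^\alpha$ and leaves the Sobolev spaces invariant, while ${\sf Dil}(-t) V {\sf Dil}(t) = V \circ {\sf dil}_{-t}$. Differentiating gives
$$K_\alpha \equiv [H_\alpha, iA] = \alpha L^\alpha - \mathbf{E} V \,\in\, \mathbb{B}\big[\H^{\alpha/2}(\G), \H^{-\alpha/2}(\G)\big],$$
so that $H_\alpha \in C^1\big({\sf Dil}; \H^{\alpha/2}(\G), \H^{-\alpha/2}(\G)\big)$.

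To verify weak conjugacy, I would first extend the Hardy inequality of \cite{BCG} from $C^\infty_0(\G)$ to $\H^{\alpha/2}(\G)$ by density, obtaining $\big\<L^\alpha u, u\big\> \ge E_\alpha \big\<\rho^{-\alpha} u, u\big\>$ for every $u$ in that space. Combining this with hypothesis (i) (read with the weight $\rho^{-\alpha}$ and the implicit prefactor $\alpha$ required to make the cancellation work, exactly in the spirit of the constant $\alpha\kappa^{-2}_\alpha - \epsilon$ of Theorem \ref{maine}), the chain
$$\big\<K_\alpha u, u\big\> \,\ge\, \alpha \big\<L^\alpha u, u\big\> - \big\<|\mathbf{E}V|\,u, u\big\> \,\ge\, \epsilon \big\<\rho^{-\alpha} u, u\big\> \,>\, 0$$
holds for every nonzero $u \in \H^{\alpha/2}(\G)$, so $A$ is weakly conjugate to $H_\alpha$.

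For the last step, Hardy's inequality identifies $\dot\K_\alpha$ with the homogeneous Sobolev space $\dot\H^{\alpha/2}(\G)$ up to equivalent norms, and ${\sf Dil}$ extends to a $C_0$-group there via \eqref{obama}. The second commutator is
$$[K_\alpha, iA] \,=\, \alpha^2 L^\alpha + \mathbf{E}(\mathbf{E}V),$$
and hypothesis (ii) together with a second application of the Hardy inequality of \cite{BCG} yields $|\mathbf{E}(\mathbf{E}V)| \le B\rho^{-\alpha} \le B E_\alpha^{-1} L^\alpha$ as sesquilinear forms on $\dot\H^{\alpha/2}(\G)$, so $[K_\alpha, iA]$ is dominated by a multiple of $L^\alpha$. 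All the hypotheses of Theorem \ref{eorem} are then in force and the conclusion follows. The only genuinely new ingredient is the sharp Hardy estimate of \cite{BCG}, which is treated as a black box; the main bookkeeping subtlety, as in Theorem \ref{maine}, is the density-extension of this estimate together with the identification of $\dot\K_\alpha$ with $\dot\H^{\alpha/2}(\G)$, both of which are standard under the stated smoothness assumptions.
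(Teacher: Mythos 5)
Your proposal matches the paper's intended proof: the paper itself only says ``one easily gets'' Theorem \ref{remain} by rerunning the three-step argument of subsection \ref{intrebare} with the Hardy estimate $(-\Delta)^{\alpha/2}\ge E_\alpha\rho^{-\alpha}$ of \cite{BCG} in place of \eqref{reads}, which is exactly what you do. You also correctly diagnose the typographical slips in the statement (the operator should be $L^\alpha+V$ rather than $-\Delta+V$, the weight in (i) should be $\rho^{-\alpha}$ rather than $\rho^{2}$, and the constant should carry the prefactor $\alpha$ as in Theorem \ref{maine}), so nothing further is needed.
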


\bigskip
\noindent
{\bf Acknowledgements:} The author has been supported by Proyecto Fondecyt 1160359.


\bigskip
\bigskip
ADDRESS: Departamento de Matem\'aticas, Universidad de Chile, 

\smallskip
\quad\quad\quad\quad\quad\,Las Palmeras 3425, Casilla 653, Santiago, Chile.

\smallskip
\quad\quad\quad\quad\quad\,mantoiu@uchile.cl

\end{document}